\theoremstyle{plain}
\newtheorem{thm}{Theorem}[section]
\newtheorem{lemma}[thm]{Lemma} 
\newtheorem{prop}[thm]{Proposition}
\theoremstyle{definition}
\newtheorem{defi}[thm]{Definition}
\theoremstyle{remark}
\newtheorem{rem}[thm]{Remark}
\numberwithin{equation}{section}
\newtheorem{ex}[thm]{Example}
\newcommand{\lgw}{\longrightarrow}
\newcommand{\lgm}{\longmapsto}
\newcommand{\lb}{\llbracket}
\newcommand{\rb}{\rrbracket}
\newcommand{\ovl}{\overline}
\newcommand{\wdh}{\widehat}
\newcommand{\wdt}{\widetilde}
\newcommand{\R}{\mathbb{R}}
\newcommand{\K}{\mathbb{K}}
\newcommand{\N}{\mathbb{N}}
\newcommand{\C}{\mathbb{C}}
\newcommand{\Q}{\mathbb{Q}}
\renewcommand{\t}{\tau}
\renewcommand{\a}{\alpha}
\renewcommand{\b}{\beta}
\renewcommand{\phi}{\varphi}
\newcommand{\w}{\underline{w}}
\newcommand{\e}{\varepsilon}
\begin{document}
\title{Artin approximation over Banach spaces}

\author{Guillaume Rond}
\email{guillaume.rond@univ-amu.fr}
\address{Universit\'e Publique, France}

\begin{abstract}
We give examples showing that the usual Artin Approximation theorems valid for convergent  series over a field are no longer true for convergent  series over a commutative Banach algebra. In particular we construct an example of a commutative integral  Banach algebra $R$ such that the ring of formal power series over $R$ is not flat over the ring of convergent power series over $R$.
\end{abstract}

\thanks{The author is deeply grateful to the UMI LASOL of the CNRS where this project has been carried out.}

\subjclass[2010]{13J05, 13B40, 16W80, 46J99}
\keywords{Banach algebra, flatness, Artin approximation}

\maketitle
%

\section{Introduction}
The classical Artin Approximation Theorem is the following:

\begin{thm}\label{Ar_cl}\cite{Ar68}
  Let $F(x,y)$ be a  
  vector of convergent power series over $\C$ in two sets of variables $x$ and $y$. Assume given a formal power series solution $\wdh{y}(x)$ vanishing at 0,
  $$F(x,\wdh{y}(x))=0.$$
  Then, for any $c\in\N$,  there exists a convergent power series solution $y(x)$ vanishing at 0,
  $$F(x,y(x))=0$$
  which coincides with $\wdh{y}(x)$ up to degree $c$,
  $$y(x)\equiv \wdh{y}(x) \text{ modulo } (x)^c.$$
  
  \end{thm}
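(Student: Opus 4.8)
The plan is to reduce the statement to a situation controlled by the holomorphic Implicit Function Theorem, and to handle the general (possibly singular) case by an induction on the number of $x$-variables resting on Weierstrass preparation. Writing $F=(F_1,\ldots,F_p)$ and setting $z=y-\wdh{y}(x)$, the Taylor expansion around the formal solution reads
\[
F(x,\wdh{y}(x)+z)=\frac{\partial F}{\partial y}(x,\wdh{y}(x))\,z+O(z^2),
\]
since $F(x,\wdh{y}(x))=0$. One is therefore looking for a small convergent correction $z\in(x)^c$, and the behavior of the Jacobian matrix $J(x):=\tfrac{\partial F}{\partial y}(x,\wdh{y}(x))$ governs everything.

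First I would dispose of the nonsingular case: if the number of equations equals the number of $y$-variables and $\det J(0)\neq 0$, then the convergent Implicit Function Theorem produces a \emph{unique} convergent solution $y(x)$ vanishing at $0$. Since the formal expansion of this convergent solution is itself a formal solution, uniqueness of the formal solution forces $y(x)=\wdh{y}(x)$ as formal series, so the congruence modulo $(x)^c$ holds automatically. This is the trivial core of the theorem.

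The substance lies in the degenerate case, where $\det J(0)=0$ or the system is overdetermined. Here the key tool is a version of Tougeron's Implicit Function Theorem: after extracting a maximal minor $\D$ of $J$ and passing to a square subsystem, one truncates $\wdh{y}$ to a convergent polynomial approximate solution $y^{(c)}$ satisfying $F(x,y^{(c)})\equiv 0$ modulo a sufficiently high power of $(x)$ measured against $\D^2$; such an approximate solution can then be corrected to an exact convergent one by a convergent Newton-type iteration, the correction lying in $(x)^c$ so that agreement with $\wdh{y}(x)$ is preserved. To arrange the hypotheses of this lemma I would induct on the number of $x$-variables $n$, using Weierstrass division to eliminate one variable at a time while keeping both the approximate solution and the minor $\D$ under control.

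The main obstacle is precisely this reduction of the singular case: one must ensure that after each Weierstrass step the nondegeneracy recorded by $\D$ persists and that the nested congruence conditions survive the inductive passage from $n$ to $n-1$. This is the heart of Artin's argument, and it is also where the \emph{convergent} (rather than merely formal) nature of the solution must be secured through the iteration, since naive truncation and substitution do not by themselves preserve convergence.
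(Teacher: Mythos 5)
The paper does not prove this theorem at all: it is quoted from \cite{Ar68}, and the only indication given is the remark that ``the main tools for proving this theorem are the implicit function theorem and the Weierstrass division theorem.'' Your outline is faithful to that: the split into a nonsingular case (handled by the holomorphic implicit function theorem plus uniqueness of the formal solution) and a singular case (handled by a Tougeron-type implicit function theorem combined with an induction on the number of $x$-variables via Weierstrass division) is exactly the architecture of Artin's original argument. The nonsingular case as you present it is complete and correct.

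However, what you have written for the singular case is a description of a proof, not a proof, and the gap sits precisely where you locate ``the heart of Artin's argument.'' Two concrete issues are left unresolved. First, to invoke Tougeron's lemma you need an approximate solution $y^{(c)}$ and a minor $\D$ of the Jacobian such that $F(x,y^{(c)})\equiv 0$ modulo $\D(x,y^{(c)})^{2}\cdot(x)^{c}$ (or a comparable condition); but the natural object $\D(x,\wdh{y}(x))$ is only a \emph{formal} series, it may vanish to arbitrarily high (even infinite) order, and truncating $\wdh{y}$ at degree $c$ gives control modulo $(x)^{c}$ only, with no a priori comparison to $\D^{2}$. Deciding which minor to use, and handling the case where every relevant minor vanishes identically along $\wdh{y}(x)$ (where one must first replace the system $F$, e.g.\ by generators of a suitable prime ideal of maximal generic Jacobian rank), is not addressed. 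Second, the induction on $n$ is only gestured at: one must choose coordinates so that a distinguished series (essentially $\D^{2}$ or a product of it with other data) is $x_{n}$-regular, apply Weierstrass division to rewrite the unknowns as polynomials in $x_{n}$ with coefficients that are new unknown series in $(x_{1},\ldots,x_{n-1})$, and verify that the resulting system again satisfies the inductive hypotheses and that solving it yields a solution of the original system congruent to $\wdh{y}$ modulo $(x)^{c}$. None of these verifications is carried out, and they are exactly the content of Artin's proof. Since the paper itself offers no proof to compare against, the fair verdict is: correct strategy, consistent with the tools the paper names, but incomplete as a proof.
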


The main tools for proving this theorem are the implicit function theorem and the Weierstrass division theorem. But in the case the equations $F(x,y)$ are linear in $y$, this theorem is equivalent to the faithful flatness of the morphism $\C\{x\}\lgw \C\lb x\rb$ (see \cite[Example 1.4]{R} for instance or \cite[I. ¤3 Proposition 13]{B}). In fact the faithful flatness of this morphism comes from the fact that $\C\{x\}$ is a Noetherian local ring. And the Noetherianity of $\C\{x\}$ is usually proved by using the Weierstrass division theorem.\\
Another version of this theorem is the following one:

\begin{thm}\label{SArtin}\cite{Ar69}\cite{W}
  Let $F(x,y)$ be a vector of convergent power series over $\C$ in two sets of variables $x$ and $y$. Then  for any integer $c$ there exists an integer $\b$ such that for any  given approximate solution  $\ovl{y}(x)$ at order $\b$, $\ovl y(0)=0$,
  $$F(x,\ovl{y}(x))\equiv 0\text{ modulo } (x)^{\b},$$
 there exists a formal power series solution $y(x)$ vanishing at 0,
  $$F(x,y(x))=0$$
  which coincides with $\ovl{y}(x)$ up to degree $c$,
  $$y(x)\equiv \ovl{y}(x) \text{ modulo } (x)^c.$$
  \end{thm}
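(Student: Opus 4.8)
The plan is to deduce the strong approximation statement (Theorem~\ref{SArtin}) from the ordinary Artin Approximation Theorem (Theorem~\ref{Ar_cl}) by a compactness / ultraproduct argument, which is the standard route and the one that works uniformly over $\C$. The key observation is that the existence of $\b$ is a purely \emph{finiteness} assertion: for fixed $c$ we must produce a single integer $\b$ that works for \emph{all} approximate solutions $\ovl y(x)$ simultaneously, and the obstruction is exactly that one cannot read off such a uniform bound from Theorem~\ref{Ar_cl} applied to one solution at a time.

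First I would argue by contradiction. Suppose the theorem fails for some fixed $c$; then for every $\b\in\N$ there exists an approximate solution $\ovl y^{(\b)}(x)$ with $\ovl y^{(\b)}(0)=0$ and $F(x,\ovl y^{(\b)}(x))\equiv 0 \bmod (x)^\b$, yet no formal solution $y(x)$ of $F(x,y)=0$ agrees with $\ovl y^{(\b)}(x)$ modulo $(x)^c$. The strategy is to package this sequence of counterexamples into a single object over a larger ring. Concretely, I would pass to an ultrapower: let $\mathcal U$ be a non-principal ultrafilter on $\N$ and form the ultraproduct $\C^*=\prod_{\b} \C/\mathcal U$, which is again an algebraically closed field of characteristic $0$. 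The sequence $(\ovl y^{(\b)}(x))_\b$ determines an element $y^*(x)$ in a suitable ring of power series over $\C^*$, and because the congruences $F(x,\ovl y^{(\b)}(x))\equiv 0 \bmod (x)^\b$ hold with $\b\to\infty$ along $\mathcal U$, the limit object genuinely satisfies $F(x,y^*(x))=0$ as a formal solution over $\C^*$.

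At this point I would invoke Artin Approximation over $\C^*$ (Theorem~\ref{Ar_cl} holds over any complete valued or algebraically closed field of characteristic zero, via Weierstrass division) to produce a convergent, hence formal, solution $y^*(x)$ over $\C^*$ agreeing with the ultralimit solution modulo $(x)^c$. Transferring this back by {\L}o{\'s}'s theorem, the statement ``there exists a solution agreeing modulo $(x)^c$'' is a first-order condition on the coefficients up to degree $c$ (finitely many data), so it must hold for $\ovl y^{(\b)}(x)$ for $\mathcal U$-almost all $\b$. This contradicts the assumption that \emph{no} such solution exists for any $\b$, completing the proof.

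The hard part will be making the transfer rigorous: one must check that the finitely many truncation conditions defining an ``approximate solution at order $\b$'' and the existence of a matching solution modulo $(x)^c$ are genuinely expressible in a first-order language to which {\L}o{\'s}'s theorem applies, and that Artin Approximation is available over the ultrapower field. The delicate point is controlling the \emph{variables}: $F$ has finitely many $x$ and $y$ variables, so the relevant coefficient data up to degree $c$ lie in finite-dimensional $\C$-spaces, which is what lets the first-order encoding go through. An alternative, more self-contained route avoiding model theory is the Néron–Popescu / Artin–Rotthaus approach, but over $\C$ the ultraproduct argument of Denef–Lipshitz is the cleanest, and I expect the proof to follow \cite{Ar69} and \cite{W} along these lines.
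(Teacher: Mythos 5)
First, note that the paper does not prove Theorem~\ref{SArtin}: it is quoted as a classical result with references to \cite{Ar69} and \cite{W}, both of which establish it by direct inductive arguments (Weierstrass division, Jacobian criteria, induction on the number of $y$-variables and on the height of the ideal generated by $F$), not by compactness. The route you propose --- ultraproducts, due to Becker--Denef--Lipshitz--van den Dries --- is a legitimate and well-known alternative, so the choice of strategy is reasonable. But as written your argument has a genuine gap at the decisive step, and a secondary confusion along the way.

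The decisive gap is the ``transfer back by {\L}o\'s''. You assert that ``there exists a formal solution $y(x)$ of $F(x,y)=0$ agreeing with $\ovl y^{(\b)}(x)$ modulo $(x)^c$'' is a first-order condition on the finitely many coefficients of $\ovl y^{(\b)}$ up to degree $c$. It is not, in any a priori sense: the parameters live in a finite-dimensional space, but the existential witness $y(x)$ ranges over $\C\lb x\rb^N$, an infinite-dimensional object, so the quantifier is not first-order over $\C$. What you would need is that the set $W_\infty\subseteq \C^D$ of degree-$c$ truncations of exact formal solutions is constructible (hence definable, so that {\L}o\'s applies). That constructibility is true, but it is a \emph{corollary} of the strong approximation theorem in \cite{Ar69} --- it is essentially equivalent to the statement being proved --- so invoking it here is circular. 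By contrast, the sets $W_{\b'}$ of truncations of solutions modulo $(x)^{\b'}$ \emph{are} constructible by Chevalley (only finitely many coefficients are involved), but the identity $\bigcap_{\b'}W_{\b'}=W_\infty$, or the stabilization of this descending chain, is again the whole content of the theorem. In the actual Becker--Denef--Lipshitz--van den Dries proof the descent from the ultrapower to almost all factors is not obtained from Theorem~\ref{Ar_cl} as a black box: one must prove that the separated ultrapower $A_1=\bigl(\prod_{\mathcal U}\C\lb x\rb\bigr)/\bigcap_n (x)^n$ is a Noetherian complete local ring of Weierstrass type, and then rerun an Artin-style induction (or, nowadays, invoke Popescu's theorem for the regular morphism $\C\lb x\rb\to A_1$) to show that solvability descends. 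Your sketch omits exactly this work.

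A secondary problem: the intermediate step ``invoke Artin Approximation over $\C^*$ \ldots\ to produce a convergent, hence formal, solution over $\C^*$'' is not meaningful. The ultrapower $\C^*$ is a huge algebraically closed field with no useful complete absolute value, so ``convergent'' has no sense there, and Theorem~\ref{Ar_cl} is not available over it in the form stated. Moreover no such invocation is needed: the ultralimit of the $\ovl y^{(\b)}$ already gives an exact solution in the separated ultrapower, because $F(x,y^*)\in\bigcap_n (x)^n$. The real difficulty is entirely in descending that solution back to $\C\lb x\rb$ together with the congruence modulo $(x)^c$, and that is the part your proposal leaves unproved.
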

  In particular this result implies that, if $F(x,y)=0$ has approximate solutions at any order, then it has a formal (even convergent by Theorem \ref{Ar_cl}) power series solution.\\
  Let us mention that these results remain valid when we replace $\C$ by a complete valued field, or when we replace the ring of convergent power series over $\C$ by the ring of algebraic power series over a field. In fact these results remain true in the more general setting of excellent Henselian local rings by \cite{P} (see \cite{R} for a review of all these different results).\\
  \\
  The aim of this note is to show that these results are no longer true when we replace $\C$ by a commutative Banach algebra over $\R$ or $\C$. In the first part we construct a commutative Banach algebra $R$ such that $R\{t\}\lgw R\lb t\rb$ is not  flat, showing that Artin approximation theorem is not true for linear equations with coefficients in $R\{t\}$.\\
    Let us mention here that  $R\lb t\rb$ is flat over $R$, for a commutative ring $R$,  if and only if $R$ is coherent (indeed $R\lb t\rb$ is a direct product of copies of $R$ - see \cite[Theorem 2.1]{C}). And there are several known examples of Banach algebras which are not coherent (in fact most of the known Banach algebras are not coherent; see for instance \cite{MVR} or  \cite{H} and the references herein). But the flatness of $R\{t\}\lgw R\lb t\rb$ is a different property that is not related to the coherence of $R$.\\
   In the second part we provide an example of one polynomial $F(y)$ with coefficients in  $R[t]$, where $R$ is the Banach algebra of holomorphic functions over a disc, with the following property: $F(y)$ has approximate solutions up to any order but has no solution in $R\lb t\rb$. This shows that Theorem \ref{SArtin} does not hold for convergent power series over a Banach algebra. Let us mention that this example is a slight modification of an example of Spivakovsky related to a similar problem \cite{S}.\\
   Nevertheless we mention that  in the case where $R$ is a complete valuation ring of rank one (in particular a non-archimedean Banach algebra), Schoutens and Moret-Bailly proved several extensions of Theorems \ref{Ar_cl} and \ref{SArtin} (see \cite{Sc} and \cite{MB}).\\
   The note has been motivated by questions from Nefton Pali and Wei Xia.
   

\section{A Banach algebra $R$ such that $R\{t\}\lgw R\lb t\rb $ is not  flat}

Let $\K=\R$ or $\C$. We begin by the following definition of power series in countable many indeterminates:
\begin{defi}
Let $\N^{(\N)}$ be the submonoid of $\N^\N$ formed by the sequences whose all but finitely terms are 0. Let $(x_i)_{i\in\N}$ be a countable family of indeterminates. Then  $\K\lb x_i\rb_{i\in\N}$ is the set of series $\sum_{\a\in \N^{(\N)}} a_\a x^\a$
where $x^\a=x_0^{\a_0}\cdots x_n^{\a_n}\cdots$. This former product is finite since $\a_i=0$ for $i$ large enough. This set is a commutative ring since the sum of sequences $\N^{(\N)}\times\N^{(\N)}\lgw \N^{(\N)}$ has finite fibers (see \cite[Chapter III, $\S$ 2, 11]{Bo}). Let us mention that this ring is not the $(x)$-adic completion of $\K[x]$, the ring of polynomials in the $x_i$ (see \cite{Y} for instance).
\end{defi}

 Let $x$, $y$, $z$ and $w_k$ for $k\in\N$ be indeterminates. For simplicity we denote by $\w$ the vector of indeterminates $(w_0,w_1,\ldots)$. We denote by $\K[x,y,z,\w]$ the ring of polynomials in the indeterminates $x$, $y$, $z$, $\w$.\\
 For a polynomial $\displaystyle p=\sum_{k\in\N,l\in\N,m\in\N,\a\in  \N^{(\N)}} a_{k,l,m,\a}x^ky^lz^m\w^\a \in\K[x,y,z,\w]$ we set
$$\|p\|:=\sum_{k,l,m,\a} |a_{k,l,m,\a}|.$$
This is well defined because the sum is finite.
This defines a norm on $\K[x,y,z,\w]$. \\
We denote by $\K\{x,y,z,\w\}$ the completion of $\K[x,y,z,\w]$ for this norm. This is the following commutative Banach algebra:
$$\left\{\sum_{k\in\N,l\in\N,m\in\N,\a\in \N^{(\N)}}a_{k,l,m,\a}x^ky^lz^m\w^\a \mid \sum_{k\in\N,l\in\N,m\in\N,\a\in \N^{(\N)}}|a_{k,l,m,\a}|<\infty\right\}$$
and the norm of an element $\displaystyle f:=\sum_{k\in\N,l\in\N,m\in\N\a\in  \N^{(\N)}}a_{k,l,m,\a}x^ky^lz^m\w^\a$ is
$$\| f\|:=\sum_{k\in\N,l\in\N,m\in\N,\a\in  \N^{(\N)}}|a_{k,l,m,\a}|.$$
In particular $\K\{x,y,z,\w\}$ is a subring of $\K\lb x,y,z, w_i\rb_{i\in\N}$.\\
We denote by $I$ the ideal  of $\K[x,y,z,\w]$ generated by the polynomials
$$xw_0-z^2 \text{ and } yw_k-(k+1)xw_{k+1} \text{ for all } k\geq 0.$$
The ideal $I\K\{x,y,z,\w\}$ is not closed since it is not finitely generated. Thus, we denote by $\ovl I$ its closure. This is the set of sums
$$\sum_{k\in\N} f_k(x,y,z,\w)$$
such that $f_k(x,y,z,\w)\in I\K\{x,y,z,\w\}$ and $\sum_k\|f_k(x,y,z,\w)\|<\infty$.\\
\begin{defi}
 We denote by $R$ the Banach $\K$-algebra $\K\{x,y,z,\w\}/\ovl I$. 
 \end{defi}
 In order to denote that two series $f$ and $g\in\K\{x,y,z,\w\}$ have the same image in $R$, we write $f\equiv_R g$. The norm of the image $\ovl f$ of an element $f\in\K\{x,y,z,\w\}$ is
 $$\|\ovl f\|=\inf_{g\in \ovl I}\|f+g\|=\inf_{g\in I}\|f+g\|.$$
Now we denote by $R\{t\}$ the ring of convergent series in the indeterminate $t$ with coefficients in $R$. We have the following result:

\begin{prop}\label{thm1}
The linear equation
\begin{equation}\label{main_eq} (x-yt)f(t)=z^2\end{equation}
has a unique solution $f(t)$ in $R\lb t\rb$ and this solution is not convergent.

\end{prop}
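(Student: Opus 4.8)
The plan is to convert the equation \eqref{main_eq} into a recursion on the coefficients of the unknown power series in $t$, solve that recursion explicitly, and then read off both the uniqueness and the factorial growth of the coefficients that forces divergence. First I would write a candidate solution as $f(t)=\sum_{k\geq 0}f_kt^k$ with $f_k\in R$, expand $(x-yt)f(t)=z^2$, and compare coefficients of $t^k$; this gives $xf_0\equiv_R z^2$ together with $xf_k\equiv_R yf_{k-1}$ for every $k\geq 1$. The defining relations of $R$ are tailored to exactly this recursion: from $xw_0\equiv_R z^2$ and, reading the generator $yw_{k-1}-k\,xw_k$ as $yw_{k-1}\equiv_R k\,xw_k$, I would check that $f_k=k!\,w_k$ is a solution. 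Indeed $xf_0=xw_0\equiv_R z^2$, and for $k\geq 1$ one has $xf_k=k!\,xw_k$ while $yf_{k-1}=(k-1)!\,yw_{k-1}\equiv_R (k-1)!\,k\,xw_k=k!\,xw_k$. Thus $f(t)=\sum_{k\geq 0}k!\,w_kt^k$ is a solution in $R\lb t\rb$.

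For uniqueness I would observe that the difference $h(t)=\sum_k h_kt^k$ of two solutions satisfies $(x-yt)h(t)=0$, i.e. $xh_0\equiv_R 0$ and $xh_k\equiv_R yh_{k-1}$. If $x$ is a non-zero-divisor of $R$, then $h_0\equiv_R 0$, hence $xh_1\equiv_R yh_0\equiv_R 0$, and inductively $h_k\equiv_R 0$ for all $k$, so the solution is unique. Thus uniqueness reduces to showing that $x$ is a non-zero-divisor of $R$ (equivalently that $\ovl I$ is prime and $x\not\equiv_R 0$). The relations force, after inverting $x$, the values $w_k=z^2y^k/(k!\,x^{k+1})$, so the natural route is to consider the $\K$-algebra homomorphism $\K[x,y,z,\w]\lgw \K[x,x^{-1},y,z]$ fixing $x,y,z$ and sending $w_k\mapsto z^2y^k/(k!\,x^{k+1})$; it annihilates the generators of $I$, hence factors through $\K[x,y,z,\w]/I$ with target an integral domain. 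The hard part will be to promote this from $\K[x,y,z,\w]/I$ to the Banach quotient $R=\K\{x,y,z,\w\}/\ovl I$, that is, to control the closed ideal $\ovl I$ (which is not finitely generated and is a priori much larger than $I$) and to verify that $x$ remains regular modulo $\ovl I$. This is the step I expect to be the main obstacle.

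The non-convergence, by contrast, is the easy but decisive computation, and it needs only a lower bound on the norm of $\ovl{w_k}$. For each $k$ I would introduce the bounded linear functional $\phi_k\colon \K\{x,y,z,\w\}\lgw \K$ extracting the coefficient of the single monomial $w_k$; with respect to the $\ell^1$-norm $\|\cdot\|$ it has norm $1$. Every monomial occurring in a generator of $I$ is divisible by $x$, by $y$, or by $z^2$, and this divisibility is preserved after multiplication by an arbitrary polynomial; hence no element of $I$ carries a nonzero coefficient on the bare monomial $w_k$, so $\phi_k$ vanishes on $I$ and, being continuous, on $\ovl I$. Therefore $\|\ovl{w_k}\|\geq |\phi_k(w_k)|=1$, while trivially $\|\ovl{w_k}\|\leq\|w_k\|=1$, giving $\|\ovl{w_k}\|=1$ in $R$ for every $k$. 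Consequently the coefficients of the solution satisfy $\|k!\,\ovl{w_k}\|=k!$, and since $\sum_k k!\,r^k=\infty$ for every $r>0$, the series $f(t)$ has radius of convergence $0$ and does not belong to $R\{t\}$.
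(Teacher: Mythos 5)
Your reduction of the proposition to the single statement that $x$ is not a zero divisor in $R$ is exactly the paper's strategy: the coefficientwise recursion $xf_0\equiv_R z^2$, $xf_k\equiv_R yf_{k-1}$ forces $f_k=k!\,w_k$ once one can cancel $x$, and your divergence argument via the coefficient functionals $\phi_k$ is a correct (and more carefully justified) version of the paper's one-line remark that the monomial $w_k$ has coefficient $0$ in every element of $I$, hence of $\ovl I$, so that $\|\ovl{w_k}\|=1$. But the regularity of $x$ in $R$ is the real content here, and you do not prove it; you explicitly flag it as ``the main obstacle,'' and the route you sketch does not close the gap. The map $w_k\mapsto z^2y^k/(k!\,x^{k+1})$ into $\K[x,x^{-1},y,z]$ kills the generators of $I$, but even at the polynomial level this only gives $I\subseteq\Ker$; to deduce that $x$ is regular modulo $I$ you would need the induced map on $\K[x,y,z,\w]/I$ to be injective, which you do not address. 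More seriously, this substitution has no evident continuous extension to $\K\{x,y,z,\w\}$ (the images involve unboundedly negative powers of $x$), so it says nothing about the closed ideal $\ovl I$, which is a priori strictly larger than $I\K\{x,y,z,\w\}$ and is precisely where the difficulty lies. (Also, ``$x$ is a non-zero-divisor'' is not equivalent to ``$\ovl I$ is prime''; the latter is stronger and is only established later, in Theorem \ref{main_thm}, as a consequence of the former.)

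The paper fills this gap with Lemma \ref{canc}, whose proof is long and of a quite different nature: it exhibits an explicit Gr\"obner basis $\{F_j,G_{k,l}\}$ of $I$ for a suitable monomial order on the polynomial ring in countably many indeterminates, derives from it a normal form $E$ (the monomials of \eqref{cases}), shows that each normal-form monomial is $I$-equivalent to only finitely many monomials so that the normal form extends to convergent series and $\ovl E\cap\ovl I=0$, and finally checks directly that multiplication by $x$ is injective on normal forms. Without this lemma, or some equivalent control of $\ovl I$, neither the uniqueness nor the identification $f_k=k!\,w_k$ --- and hence not the divergence conclusion either --- is established. Your proposal is therefore a faithful outline of the paper's argument with its central lemma left open.
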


From this we will deduce  the following result:
\begin{thm}\label{main_thm}
The Banach $\K$-algebra $R$ is an integral domain and 
 the morphism $R\{t\}\lgw R\lb t\rb$ is not flat.
\end{thm}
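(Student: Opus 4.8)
The plan is to establish the two assertions separately, using Proposition \ref{thm1} as a black box. Throughout set $A=R\{t\}$ and $B=R\lb t\rb$, and recall from Proposition \ref{thm1} that the unique solution of (\ref{main_eq}) is $f=\sum_{n\geq 0}n!\,w_nt^n$, while in $R$ one has $z^2\equiv_R xw_0$ and $yw_n\equiv_R (n+1)xw_{n+1}$.

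To see that $R$ is a domain I would realise it as a subring of a domain. After inverting $x$ the defining relations force $w_n=y^nz^2/(n!\,x^{n+1})$, so the assignment $x\mapsto x$, $y\mapsto y$, $z\mapsto z$, $w_n\mapsto y^nz^2/(n!\,x^{n+1})$ extends to a $\K$-algebra homomorphism $\Phi$ of $\K\{x,y,z,\w\}$ into a suitable domain containing $\K\{x,y,z\}$ and $x^{-1}$. One checks directly that the two families $xw_0-z^2$ and $yw_k-(k+1)xw_{k+1}$ map to $0$, hence $I\subseteq\ker\Phi$, and since $\ker\Phi$ is closed this gives $\ovl I\subseteq\ker\Phi$. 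The delicate point is the reverse inclusion $\ker\Phi\subseteq\ovl I$: it amounts to showing that no element outside $\ovl I$ is annihilated by $\Phi$, which is a statement about the quotient norm of $R$ proved by the same estimates that underlie Proposition \ref{thm1}. Granting it, $\ovl I=\ker\Phi$ is prime, $R$ is a domain, and in particular $x$ and $x-yt$ are nonzerodivisors in $A$ and $B$.

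For the non-flatness I would argue by contradiction, the key manoeuvre being to localise so as to promote flatness to faithful flatness. Let $\ma\subset R$ be the maximal ideal generated by the images of $x,y,z,\w$, and let $\ma_0=\ma A+tA\subset A$, a maximal ideal with $A/\ma_0=\K$; note $B/\ma_0B=\K\neq 0$, so $\ma_0B\neq B$. Suppose $A\lgw B$ were flat. Localising at the multiplicative set $A\setminus\ma_0$ yields a flat map $A_{\ma_0}\lgw B_{\ma_0}$ with $\ma_0B_{\ma_0}\neq B_{\ma_0}$; as $A_{\ma_0}$ is local, a flat map out of a local ring whose maximal ideal does not generate the unit ideal is faithfully flat (going-down makes $\operatorname{Spec}B_{\ma_0}\to\operatorname{Spec}A_{\ma_0}$ surjective). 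Faithful flatness yields the contraction identity $(x-yt)B_{\ma_0}\cap A_{\ma_0}=(x-yt)A_{\ma_0}$. Since $z^2=(x-yt)f\in (x-yt)B\subseteq (x-yt)B_{\ma_0}$ and $z^2\in A_{\ma_0}$, this forces $z^2\in (x-yt)A_{\ma_0}$, equivalently $f\in A_{\ma_0}$.

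The main obstacle is to contradict $f\in A_{\ma_0}$, i.e. to show that the non-convergence of $f$ survives localisation: there is no $s\in A\setminus\ma_0$ with $sf\in A$. Writing $s=\sum_i s_it^i$, the condition $s\notin\ma_0$ means $s_0\notin\ma$; comparing coefficients, the $t^m$-coefficient of $sf$ is $s_0\,m!\,w_m+\sum_{n<m}s_{m-n}\,n!\,w_n$. Using a uniform lower bound $\|s_0w_m\|_R\geq c>0$ together with the summability of the $\|s_i\|_R$, the leading term has norm $\gtrsim m!$ whereas the remaining sum is $O((m-1)!)$, so the $t^m$-coefficient has norm growing like $m!$; this is incompatible with $sf\in A$. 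The crux is therefore the estimate $\|s_0w_m\|_R\geq c>0$, which rests on the explicit quotient norm of $R$ and is of the same nature as the non-convergence statement in Proposition \ref{thm1}. Once $f\notin A_{\ma_0}$ is established the contradiction is complete, and $A\lgw B$ is not flat.
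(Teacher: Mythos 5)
Your argument for the non-flatness half is a genuinely different route from the paper's, and it is essentially sound. The paper invokes the equational criterion of flatness (\cite[Theorem 7.6]{M}): if $R\{t\}\lgw R\lb t\rb$ were flat, the formal solution $(f(t),1)$ of $(x-yt)F-z^2G=0$ would be an $R\lb t\rb$-linear combination of convergent solutions $(a_i,b_i)$, and specializing the formal coefficients $h_i(t)$ to $h_i(0)$ produces a convergent solution whose $G$-component is a unit $1+t\e(t)$, hence a convergent solution of \eqref{main_eq}, contradicting Proposition \ref{thm1}. Your localization argument (flat over a local ring with $\ma_0B_{\ma_0}\neq B_{\ma_0}$ is faithfully flat, and the contraction identity forces $f\in A_{\ma_0}$) reaches the same contradiction and is correct, provided the one estimate you leave open is justified. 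It is, and for a cleaner reason than the two-term comparison you sketch: $I$ is generated by homogeneous elements of degree $2$, so every element of $I$, hence of $\ovl I$, has vanishing homogeneous components in degrees $\leq 1$; since the norm dominates the norm of the degree-$1$ component, the coefficient $c_m=\sum_{n\leq m}s_{m-n}n!\,w_n$ satisfies $\|c_m\|_R\geq\sum_{n\leq m}|s_{m-n}(0)|\,n!\geq|s_0(0)|\,m!$, with $s_0(0)\neq0$ because $s_0\notin\ma$. (This is exactly the mechanism by which the paper gets $\|w_k\|_R=1$.) Two small repairs: take $\ma$ to be the kernel of evaluation at the origin rather than the ideal generated by $x,y,z,\w$, which need not be closed or maximal; and note that $\sum_i\|s_i\|_R$ need not be finite (only $\sum_i\|s_i\|_R\rho^i$ for some $\rho>0$), although the tail $\sum_{n<m}\|s_{m-n}\|_R\,n!$ is still $o(m!)$.

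The integral-domain half, by contrast, has a genuine gap exactly where you write ``granting it.'' The inclusion $\ker\Phi\subseteq\ovl I$ is not a routine estimate of the same nature as Proposition \ref{thm1}: it is the technical heart of the paper, namely Lemma \ref{canc} ($x$ is not a zero divisor in $R$), proved there by exhibiting an explicit infinite Gr\"obner basis of $I$ and a normal-form set $\ovl E$ with $\ovl E\cap\ovl I=0$. Moreover, as stated your $\Phi$ is problematic: an element of $\K\{x,y,z,\w\}$ may involve infinitely many $w$-monomials, whose images $y^nz^2/(n!\,x^{n+1})$ have unbounded pole order in $x$, so the image series does not land in $\K\{x,y,z\}_{1/x}$ nor in any evident Banach domain. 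The paper goes in the opposite direction: once $x$ is known to be a nonzerodivisor, the localization map $R\lgw R_{1/x}$ is injective and $R_{1/x}\cong\K\{x,y,z\}_{1/x}$ is a domain, so $R$ is. For this half you must either import Lemma \ref{canc} or supply the normal-form argument; without it, both the claim that $R$ is a domain and the step in your flatness argument where $x-yt$ is cancelled in $B_{\ma_0}$ are unsupported.
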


\subsection{Proofs of Proposition \ref{thm1} and Theorem \ref{main_thm}}
We begin by giving the following key result:
\begin{lemma}\label{canc}
$x$ is not a zero divisor in $R$.

\end{lemma}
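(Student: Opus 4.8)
The plan is to show that $x$ is not a zero divisor in $R = \K\{x,y,z,\w\}/\ovl I$. Suppose $x f \in \ovl I$ for some $f \in \K\{x,y,z,\w\}$; I want to conclude $f \in \ovl I$. The main difficulty is that $\ovl I$ is the *closure* of $I\K\{x,y,z,\w\}$, not $I\K\{x,y,z,\w\}$ itself, so I cannot simply manipulate finite combinations of the generators $xw_0 - z^2$ and $yw_k - (k+1)xw_{k+1}$. I need to understand the normal form of elements modulo $\ovl I$ well enough to divide by $x$.

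Let me sketch the structure. First I would identify a good $\K$-linear complement of $\ovl I$ inside $\K\{x,y,z,\w\}$, i.e.\ a collection of ``standard monomials'' whose closed span maps isomorphically onto $R$. The relations $xw_0 \equiv_R z^2$ and $yw_k \equiv_R (k+1)xw_{k+1}$ suggest a rewriting system: every occurrence of $xw_0$ is replaced by $z^2$, and every occurrence of $yw_k$ is replaced by $(k+1)xw_{k+1}$. The key is to check this rewriting terminates and is confluent on monomials, so that each monomial has a well-defined normal form, and the normal forms form a basis. One must verify that the monomials appearing are exactly those not divisible by $xw_0$ nor by any $yw_k$, and that the rewriting of a monomial $\w^\a x^k y^l z^m$ yields a finite $\K$-linear combination of such reduced monomials with controlled norm, so that the process extends continuously to all of $\K\{x,y,z,\w\}$ and identifies $R$ with the completion of the span of reduced monomials.

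Granting this normal-form description, the divisibility argument becomes concrete. Writing $f$ in normal form as a norm-convergent sum of reduced monomials, I examine $xf$ and its own normal form. Multiplication by $x$ can create new reducible factors: a reduced monomial $\mu$ becomes $x\mu$, which is reducible precisely when $\mu$ contains a factor $w_0$ (giving $xw_0 \mapsto z^2$) since then $x\mu$ is not reduced, while $\w^\a x^{k}y^l z^m$ with $a_0 = 0$ stays reduced after multiplying by $x$. So I would split $f$ according to whether the $w_0$-exponent is zero, track how the normal form of $xf$ is assembled, and show that $xf \equiv_R 0$ forces every normal-form coefficient of $f$ to vanish. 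The technical heart is checking that no cancellation between the ``$w_0$-branch'' and the ``$w_0$-free branch'' can occur after reduction — this is where the particular shape of the generators (the factor $z^2$ has no $x$, and the chain $yw_k \to xw_{k+1}$ strictly raises the $w$-index) must be used to guarantee distinct reduced monomials remain distinct.

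I expect the main obstacle to be the passage from the finitely-generated ideal to its closure: I must argue that if $xf$ lies in the closure $\ovl I$, then its normal form is genuinely $0$, and then pull this back to show $f$ itself reduces to $0$ modulo $\ovl I$. Concretely, the danger is that $x f$ might be a limit of elements of $I\K\{x,y,z,\w\}$ without $x f$ being visibly a combination of generators; the normal-form / basis description is exactly what removes this ambiguity, since membership in $\ovl I$ is equivalent to having normal form $0$, a condition stable under limits because the projection onto the closed span of reduced monomials is continuous. Once that is in place, injectivity of multiplication by $x$ on the span of reduced monomials (a purely combinatorial statement about the rewriting) finishes the proof.
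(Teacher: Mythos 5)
Your overall strategy --- exhibit a set of standard monomials whose closed span is a complement of $\ovl I$ in $\K\{x,y,z,\w\}$, then show that multiplication by $x$ is injective on normal forms --- is exactly the strategy of the paper's proof. But the concrete rewriting system you propose fails on both of the points you yourself flag as the things to check, and the second failure is not repairable within your setup. First, confluence fails: the rules $xw_0\to z^2$ and $yw_k\to (k+1)xw_{k+1}$ send $yw_0w_1$ either to $xw_1^2$ (rewriting the factor $yw_0$) or to $2xw_0w_2\to 2z^2w_2$ (rewriting the factor $yw_1$); both outputs are reduced in your sense, yet $xw_1^2-2z^2w_2=w_1F_1-w_0F_2+2w_2F_0\in I$, so your reduced monomials are not linearly independent modulo $I$ and do not form a basis. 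Restoring confluence forces you to adjoin the S-polynomial relations $(l+1)yw_kw_{l+1}-(k+1)yw_lw_{k+1}$, which is precisely the Gr\"obner basis computation in the paper, and it changes the set of normal forms.

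Second, and more fundamentally, your orientation of the rewriting is norm-increasing: $yw_k\mapsto(k+1)xw_{k+1}$ multiplies coefficients by $k+1$, so $y^kw_0$ reduces to $k!\,x^kw_k$, and the convergent series $\sum_k k^{-2}y^kw_0$ would have ``normal form'' $\sum_k (k!/k^2)\,x^kw_k\notin\K\{x,y,z,\w\}$. Hence the reduction does not extend continuously to the Banach completion, and your key claim --- that membership in the closure $\ovl I$ is detected by the normal form because the projection onto reduced monomials is continuous --- collapses. This is not a technicality: the $k!$ blow-up you are sweeping into the normal form is the whole point of the construction (it is what makes $\sum_k k!\,w_kt^k$ divergent in Proposition 2.3). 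The paper is forced to orient every rewrite contractively (e.g.\ $xw_{k+1}\mapsto\frac{1}{k+1}yw_k$, with all constants in $(0,1]$), which yields a quite different set of standard monomials \eqref{cases}; in particular multiplication by $x$ then creates a reducible factor whenever any $w_i$ with $i\geq 1$ is present, not only $w_0$, so your final injectivity step must also be redone along the lines of \eqref{cases_map}. Even granting a contractive reduction, the statement $\ovl E\cap\ovl I=0$ still needs a separate argument for the closed ideal (the paper uses that each monomial is $I$-equivalent to only finitely many monomials); it does not follow from the continuity of a projection you have not constructed.
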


\begin{proof}
First of all, we will determine a subset  of $\K\{x,y,z,\w\}$ such that  every element of $\K\{x,y,z,\w\}$ is equal modulo $\ovl I$  to a unique series of this subset.\\
 First  we remark that 
\begin{equation}\label{op1}M_1:=yw_iw_j\equiv_R (i+1)xw_{i+1}w_j\equiv_R \frac{i+1}{j}yw_{i+1}w_{j-1}=:M_2\end{equation}
for all integers $i$ and $j$ with $i<j$. 
If $j=i+1$ these two monomials are equal, otherwise the largest index of a monomial $w_j$ appearing in the expression of $M_2$ is strictly less than for $M_1$.\\
Now we have, for $i>0$:
\begin{equation}\label{op2}z^2w_i\equiv_R xw_0w_i\equiv_R\frac{1}{i}yw_0w_{i-1}.\end{equation}
A well chosen composition of these operations transforms any monomial of the form $Cx^az^ky^lw_0^{n_0}\cdots w_i^{n_i}$ into a monomial of the form $rCx^{a'}z^{k'}y^lw_0^{n'_0}\cdots w_j^{n'_j}$ where $j$ is minimal and $r\in(0,1]$.\\
By repeating these two operations we may reduce every monomial to a constant times one of the following monic monomials:
\begin{equation}\label{cases}\left\{\begin{array}{cc} 
z^{\e}x^ay^l w_0^{n_0}& \text{ with } a>1,  l, n_0\geq 0\text{ and } \e\in\{0,1\},\\
z^{\e}y^lw_i^{n_i} & \text{ with }l>0, i>0, n_i>0  \text{ and } \e\in\{0,1\},\\
z^{\e}y^lw_i^{n_i}w_{i+1}^{n_{i+1}} &\text{ with }l>0,i\geq 0, n_i,n_{i+1}>0  \text{ and } \e\in\{0,1\},\\
 z^{\e} w_0^{n_0}\ldots w_{i}^{n_i}& \text{ with } n_i>0\text{ with }\e\in\{0,1\},\\
\end{array}\right.\end{equation}
We denote by $E$ the subset of $\K[x,y,z,\w]$ of polynomials that are sums of monomials of \eqref{cases} (up to multiplicative constants), and by $\ovl{E}$ the closure of $E$ in $\K\{x,y,z,\w\}$, that is the set of convergent power series whose non zero monomials are those of \eqref{cases} (up to multiplicative constants). We have shown that every polynomial is equivalent to a polynomial of $E$ modulo $I$. To prove the unicity we proceed as follows.\\
We set 
$$F_0:=xw_0-z^2, F_{k+1}:=yw_k-(k+1)xw_{k+1} \ \text{ for } k\geq 0$$
$$G_{k,l}:=(l+1)yw_kw_{l+1}-(k+1)yw_lw_{k+1} \ \text{ for all } k<l.$$
Then we consider the following monomial order: We define
$$x^ay^kz^lw_1^{\a_1}\cdots w_n^{\a_n}>x^{a'}y^{k'}z^{l'}w_1^{\a_1'}\cdots w_n^{\a_n'}$$
if 
$$a+k+l+\sum_i\a_i>a'+k'+l'+\sum_i\a_i',\ \text{ 
or }
a+k+l+\sum_i\a_i=a'+k'+l'+\sum_i\a_i' $$ 

$$ \text{ and } (l,a,k, \a_n,\ldots, \a_{0})>_{lex}(l',a',k',\a_n',\ldots, \a_0')$$
where $>_{lex}$ denotes the lexicographic order. That is, we first compare the total degree of two monomials, then we order the indeterminates as
$$z>x>y>w_l>w_k \ \text{ for all } l>k.$$
We claim that $\{F_j,G_{k,l}\}_{j,k,l\in\N,\ l>k}$ is a Gr\"obner basis of $I$ for this order. In order to prove this, we only need to compute the S-polynomials of the elements of this set of polynomials, and then their reduction (see \cite{CLO} for the terminology). This is Buchberger's Algorithm which is very classical in the Noetherian case. The case of polynomial rings in countably many indeterminates works identically, cf. \cite[Proposition 1.13]{IY} for instance. The only S-polynomials we have to consider are those of polynomials whose leading terms are not coprime, that is, for $l>k$, 
$$S(F_{k+1},F_{l+1});\ S(G_{k,l},F_{l+1});\ S(G_{k,l},F_{k}).$$
We have
$S(F_{k+1},F_{l+1})=G_{k,l}.$
Moreover
$$S(G_{k,l},F_{l+1})=y(yw_kw_l-(k+1)xw_lw_{k+1}).$$
This leading term of $S(G_{k,l},F_{l+1})$ is $-(k+1)xyw_lw_{k+1}$, and it is equal to $y(F_{k+1}w_l-yw_kw_l)$. Therefore
$S(G_{k,l},F_{l+1})=F_{k+1}yw_l.$\\
Finally we have 
$$S(G_{k,l},F_{k})=kx((l+1)yw_kw_{l+1}-(k+1)yw_lw_{k+1})+(l+1)yw_{l+1}(yw_{k-1}-kxw_{k})$$
$$=(l+1)y^2w_{k-1}w_{l+1}-k(k+1)xyw_lw_{k+1}.$$
Its leading term  is $-k(k+1)xyw_lw_{k+1}$ and it is divisible by the leading term of $F_{k+1}$. The remainder of the division of $S(G_{k,l},F_{k})$ by $F_{k+1}$ is
$$ (l+1)y^2w_{k-1}w_{l+1}-ky^2w_kw_l=yG_{k-1,l}$$
Therefore the reductions of these S-polynomials is always zero, hence the family $\{F_j,G_{k,l}\}_{j,k,l\in\N,\ l>k}$ is a Gr\"obner basis of $I$. Thus, the initial ideal of $I$ is generated by the monomials
$$z^2, xw_{k+1}, yw_kw_{l+1} \text{ for } 0\leq k< l.$$
Therefore every polynomial of $\K[x,y,z,\w]$ is equivalent modulo $I$ to a unique polynomial of $E$. \\
Now let $f\in\K\{x,y,z,\w\}$. We can write $f=\sum_{n\in\N} C_nx^{a_n}y^{b_n}z^{c_n}\w^{\a_n}$ where the $C_n$ are in $\K^*$. In particular $\sum_n|C_n|<\infty$. For every $n\in\N$, there is a unique $(a'_n,b'_n,c'_n,\a'_n)$ and a unique $r_n\in(0,1]$ such that 
$$C_nx^{a_n}y^{b_n}z^{c_n}\w^{\a_n}-r_nC_nx^{a'_n}y^{b'_n}z^{c'_n}\w^{\a'_n}\in I$$
and $x^{a_n}y^{b_n}z^{c_n}\w^{\a_n}$ has one the forms given in \eqref{cases}. Now, for every $n\in\N$, we set
$$g_n:=\sum_{k=0}^{n-1} r_kC_kx^{a'_k}y^{b'_k}z^{c'_k}\w^{\a'_k}+\sum_{k\geq n}C_kx^{a_k}y^{b_k}z^{c_k}\w^{\a_k}.$$
In particular we have that $P_n:=f-g_n\in I$ and the sequence $(g_n)_n$ converges in $\K\{x,y,z,\w\}$ to the series $g=\sum_{k\in\N} r_kC_kx^{a'_k}y^{b'_k}z^{c'_k}\w^{\a'_k}\in\K\{x,y,z,\w\}$. Therefore the sequence $(P_n)_n$ converges in $\K\{x,y,z,\w\}$, and its limits is in $\ovl I$. \\
Therefore, every power series of $\K\{ x,y,z,\w\}$ can be  written as a  sum of a power series in $\ovl I$ and a convergent power series whose monomials are as in \eqref{cases} (up to multiplicative constants). \\
We remark that, by repeating  \eqref{op1} $\lfloor \frac{j-i}{2}\rfloor$ times, we have 
$$yw_iw_j\equiv_R  ryw_{i+\lfloor \frac{j-i}{2}\rfloor}w_{j-\lfloor \frac{j-i}{2}\rfloor}$$
for some constant $r\in(0,1]$. Moreover applying \eqref{op2} reduces by 2 the degree in $z$ of a monomial. Therefore, a monomial of the form
$$Cx^ay^bz^c w_1^{\a_1}\cdots w_j^{\a_j}$$
of total degree $d=a+b+c+\sum_k\a_k$, is not equal to a monomial involving only the indeterminates
$$x,y,z, \text{ and } w_i \text{ for } i<\frac{j-\frac{c}{2}}{2^d}.$$
Moreover \eqref{op1} and \eqref{op2} transforms monomials into monomials of the same degree since $I$ is generated by homogeneous binomials. Therefore, given a monomial $M$ among those of \eqref{cases} (up to some multiplicative constant), there is finitely many monomials that are equal to $M$ modulo $I$.\\
Now let $f\in\ovl E\cap \ovl I$, $f=\sum_{(a,b,c,\a)} f_{(a,b,c,\a)}x^ay^bz^c\w^\a$. Let us fix such $(a,b,c,\a)$ such that $x^ay^bz^c\w^\a$ is one of the monic monomials of \eqref{cases}. There is only a finite number of distinct monomials that are equal to $f_{(a,b,c,\a)}x^ay^bz^c\w^\a$ modulo $I$. Let us denote them by
$$C_1x^{a_1}y^{b_1}z^{c_1}\w^{\a_1},\ldots, C_Nx^{a_N}y^{b_N}z^{c_N}\w^{\a_N}.$$
We can remark that there is only a finite number of $F_l$  that have a monomial that divides at least one of the following monic monomials
\begin{equation}\label{list}x^ay^bz^c\w^\a, x^{a_1}y^{b_1}z^{c_1}\w^{\a_1},\ldots, x^{a_N}y^{b_N}z^{c_N}\w^{\a_N}.\end{equation}
We denote them by
$F_{l_1},\ldots, F_{l_p}.$
Because $f\in\ovl I$, we can write
$f=\sum_{l\in\N} f_lF_l$
where the $f_l$  are in $\K\lb x,y,z,\w\rb$. For every $i\in\{1,\ldots, p\}$  we remove from $f_{l_i}$  all the monomials that do not divide one of the monomials \eqref{list}, and we denote by $f'_{l_i}$  the resulting polynomial. Then we have that
$$P:=\sum_{i=1}^p f'_{l_i}F_{l_i}\in I.$$
By construction the coefficients of the monomials \eqref{list} in the expansion of $P$ are the corresponding coefficients in the expansion of $f$, that is   
$$f_{(a,b,c,\a)},0,\ldots, 0$$
respectively. Therefore, the coefficient of $x^ay^bz^c\w^\a$ in the expansion of the unique $Q\in E$ such that $Q\equiv_R P$, is equal to $f_{(a,b,c,\a)}$ because no other monomial than those listed in \eqref{list} (up to some multilplicative constants) is equivalent to a monomial of the form $Cx^ay^bz^c\w^\a$ where $C\in\K^*$.
But $Q=0$ since $P\in I$, thus  $f_{(a,b,c,\a)}=0$. Hence $f=0$ and $\ovl E\cap\ovl I=0$.\\
Therefore every series of $\K\{x,y,z,\w\}$ is equivalent modulo $\ovl I$ to a unique series of $\ovl E$.\\
\\
Now take $f\in\K\{x,y,z,\w\}$ such that $x\equiv_R0$. We can write $f=xp(x,y,z,w_0)+q(y,z,\w)$ and assume that the monomials in the expansion of $xp(x,y,z,w_0)+q(y,z,\w)$ are only those of  \eqref{cases}.
Then
$$x^2p(x,y,z,w_0)+xq(y,z,\w)\equiv_{ R} 0.$$
The representation of $x^2p(x,y,z,w_0)+xq(y,z,\w)$ as a sum of monomials as in \eqref{cases} has the form
\begin{equation}\label{zero}x^2p(x,y,z,w_0)+xq(y,z,w_0,0)+\ovl q(y,z,\w)=0\end{equation}
where $\ovl q(y,z,\w)$ is the series obtained from $xq(y,z,\w)-xq(y,z,w_0,0)$ by replacing the monomials as follows (using the two previous operations \eqref{op1} and \eqref{op2}):
\begin{equation}\label{cases_map}\left\{\begin{array}{ccc} xz^{\e}y^lw_i^{n_i} &\lgm & \frac{1}{i}z^\e y^{l+1}w_{i-1}w_i^{n_i-1}, \text{if } i>0\\
xz^{\e}y^lw_i^{n_i}w_{i+1}^{n_{i+1}} & \lgm& \frac{1}{i+1}z^\e y^{l+1}w_i^{n_i+1}w_{i+1}^{n_{i+1}-1}, \text{ if } i>0\\
 xz^{\e} w_0^{n_0}\ldots w_{i}^{n_i}& \lgm   & Cz^\e yw_j^{m_j}w_{j+1}^{m_{j+1}} \text{ or } Cz^\e yw_{j}^{m_{j}}\\
\text{ for } i>0 \text{ and }n_i>0 &   & \text{ for some }C\in\K, | C|\leq 1, j\geq 0 \\
\end{array}\right.\end{equation}
Indeed for the third monomial we have 
$$ xz^{\e} w_0^{n_0}\ldots w_{i}^{n_i}\equiv_R \frac{1}{i+1}z^\e y w_0^{n_0}\cdots w_{i-1}^{n_{i-1}+1}w_i^{n_{i}-1} $$
and this monomial on the right side can be transformed into  a monomial of the form $Cz^\e yw_j^{m_j}w_{j+1}^{m_{i+1}}$ or $Cz^\e yw_j^{m_j}$ for some $C\in\K$, $| C|\leq 1$, and $j\geq 0$, by using the two  operations  \eqref{op1} and \eqref{op2} on monomials.\\
 This shows that the three types of monomials that we obtain after multiplication by $x$ are all distinct, that is the map defined by \eqref{cases_map} is injective. By \eqref{zero} we have $\ovl q(y,z,\w)=0$, therefore $q(y,z,\w)-q(y,z,w_0,0)=0$.\\
Moreover, again by \eqref{zero}, we have
 $$x^2p(x,y,z,w_0)+xq(y,z,w_0,0)=0.$$
 This shows that $x^2p(x,y,z,w_0)+xq(y,z,\w)=0$. Therefore $x$ is not a zero divisor in $R$.
 \end{proof}

\begin{proof}[Proof of Proposition \ref{thm1}]
Let $f(t)\in R\lb t\rb $ such that
$$(x-yt)f(t)=z^2.$$
By writing $f=\sum_{k=0}^\infty f_kt^k$ with $f_k\in R$ for every $k$, we have
$$xf_0=z^2$$
$$xf_k-yf_{k-1}=0\ \ \forall k\geq 1.$$
Thus
$$xf_0=z^2=xw_0$$
so $x(f_0-w_0)=0$ and $f_0=w_0$ by Lemma \ref{canc}. Then we will prove by induction on $k$ that $f_k=k!\,w_k$ for every $k$. Assume that this is true for an integer $k\geq 0$. Then we have
$$xf_{k+1}=yf_k=k!\,yw_k=(k+1)!\,xw_{k+1}.$$
Hence $x(f_{k+1}-(k+1)!\,w_{k+1})=0$ and $f_{k+1}=(k+1)!\,w_{k+1}$ by Lemma \ref{canc}.
Therefore the only solution of
$$(x-yt)f(t)=z^2$$
is the series $\sum_{k=0}^\infty k!\, w_kt^k$, and this one is  divergent because $\|w_k\|=1$. This holds because in every element of $I$, the monomial $w_k$ has coefficient 0.

\end{proof}

Now we can give the proof of Theorem \ref{main_thm}:

\begin{proof}[Proof of Theorem \ref{main_thm}]
 Since $x$ is not a zero divisor in $R$ by Lemma \ref{canc}, the localization morphism
$$R\lgw R_{1/x}$$
is injective. But $R_{1/x}$ is isomorphic to $\K\{x,y,z\}_{1/x}$ since in $R_{1/x}$ we have
$$w_0=z^2/x\text{ and } \forall k\geq 0, w_k=\frac{1}{k!}y^kz^2x^{k+1}.$$
But $\K\{x,y,z\}_{1/x}$ is an integral domain (this is a localization of the integral domain $\K\{x,y,z\}$), therefore so is $R$.\\
\\
Now assume that  the morphism $R\{t\}\lgw R\lb t\rb$ is flat. By \cite[Theorem 7.6]{M} applied to the linear equation $(x-yt)F-z^2G=0$, there exist an integer $s\geq 1$, and convergent series
$$a_1(t),\ldots, a_s(t), b_1(t),\ldots, b_s(t)\in R\{t\}$$
such that 
\begin{equation}\label{eq6}(x-yt)a_i(t)-z^2b_i(t)=0 \text{ for every }i,\end{equation}
and formal power series
$$h_1(t),\ldots, h_s(t)\in R\lb t\rb$$ such that
$$f(t)=\sum_{i=1}^s a_i(t)h_i(t),\ 1=\sum_{i=1}^sb_i(t)h_i(t).$$
Indeed the vector $(f(t),1)$ is a solution of the linear equation
$$(x-yt)f(t)-z^2g(t)=0$$
with $f(t):=\sum_{k=0}^\infty k!\, w_kt^k$.\\
 Then
$$\wdt g(t):=\sum_{i=1}^sb_i(t) h_i(0)=1+t\e(t)$$
for some $\e(t)\in R\{t\}$. Since $1$ is a unit of $R$, $1+t\e(t)$ is a unit in $R\{t\}$. \\
Set $\wdt f(t):=\sum_i a_i(t) h_i(0)$. By \eqref{eq6}, $(\wdt f(t),\wdt g(t))$ is a solution of the equation
$$(x-yt)\wdt f(t)-z^2\wdt g(t)=0.$$
Since $\wdt g(t)$ is a unit in $R\{t\}$ we have
$$(x-yt)\wdt f(t)\wdt g(t)^{-1}=z^2.$$
This contradicts Theorem \ref{thm1}. Therefore $R\{t\}\lgw R\lb t\rb$ is not flat.

\end{proof}

\section{An Example concerning the strong Artin approximation theorem}
Let $n$ be a positive integer, $x=(x_1,\ldots, x_n)$ and $\rho>0$. We set $\K=\R$ or $\C$. Then
$$B^n_\rho:=\left\{f=\sum_{\a\in\N^n}a_\a x^\a\mid ||f||_\rho:=\sum_{\a\in\N^n}|a_\a|\rho^{|\a|}<\infty\right\}$$
is a Banach space equipped with the norm $||\cdot||_\rho$. Of course $\K[x]\subset B^n_\rho$.\\

\begin{rem}
We do not have 
$$B^n_\rho\lb t\rb \cap \K\{x,t\}=B^n_\rho\{t\}.$$
For instance, the power series
$$f=\sum_{k\in\N}x_1^{k!}t^k$$
is a convergent power series in $(x,t)$, belongs to $B^n_2\lb t\rb $, but
$$\sum_k\|x_1^{k!}\|_2\t^k=\sum_k2^{k!}\t^k=\infty$$
for every $\t>0$. Therefore $f\notin B^n_2\{t\}$.
\end{rem}

We provide two examples based on an example of Spivakovsky concerning the extension of Theorem \ref{SArtin} to the nested case (see \cite{S}).

\begin{ex}\label{ex1} Let $n=1$ and set
$$F(x,t,y_1,y_2):=xy_1^2-(x+t)y_2^2\in B_\rho\{t\}[y_1,y_2].$$
Let $$\sqrt{1+t}=1+\sum_{n\geq 1}a_nt^n\in\Q\{ t\}$$ be the unique power series  such that $(\sqrt{1+t})^2=1+t$ and whose value at the origin is 1.
For every $c\in\N$ we  set $y_2^{(c)}(t):=x^c$ and $y_1^{(c)}(t):=x^c+\sum_{n=1}^ca_nx^{c-n}t^n\in B_\rho\{t\}$. Then 
$$F(x,t,y_1^{(c)}(t),y_2^{(c)}(t))\in (t)^{c+1}.$$
On the other hand the equation  $f(x,t,y_1(t),y_2(t))=0$ has no  solution $(y_1(t)$, $y_2(t))\in B_\rho\{t\}^2$ but $(0,0)$. Indeed let us denote by $T_0$ the Taylor map at 0:
$$T_0:B_\rho\{t\}\lgw \K\lb x,t\rb.$$
If $f(x,t,y_1(t),y_2(t))=0$ then 
$$xT_0(y_1(t))^2-(x+t)T_0(y_2(t))^2=0.$$
But since $\K\lb x,t\rb$ is a unique factorization domain, this equality implies that $T_0(y_1(t))=T_0(y_2(t))=0$, hence $y_1(t)=y_2(t)=0$.
\\
\\
This shows that there is no $\b:\N\lgw \N$ such that for every $y(t)\in B_\rho\{t\}^2$ and every $k\in\N$ with
$$F(x,t,y(t))\in (t)^{\b(k)}$$
there exists $\wdt y(t)\in B_\rho\{t\}^2$ such that
$$F(x,t,\wdt y(t))=0$$
 and $\wdt y(t)-y(t)\in (t)^k$.\\
 \end{ex}
 
\begin{ex} We can modify a little bit the previous example to construct a $F$ as before that does not depend on $t$. We set
 $$G(x,y_1,y_2,y_3):=xy_1^2-(x+y_3)y_2^2\in B_\rho[y_1,y_2,y_3].$$
For every $c\in\N$ we  set $y_2^{(c)}(t):=x^c$, $y_1^{(c)}(t):=x^c+\sum_{n=1}^ca_nx^{c-n}t^n$ and $y_3^{(c)}(t):=t\in B_\rho\{t\}$. Then 
$$G(x,y_1^{(c)}(t),y_2^{(c)}(t),y_3^{(c)}(t))\in (t)^c.$$
Now if $\wdt y(t)\in B_\rho\{t\}^3$ satisfies $G(x,\wdt y(t))=0$ and
$$\wdt y(t)-y(t)\in (t)^2$$
then $\wdt y_3(t)=x+t+\e(t)$ with $\e(t)\in (t^2)$. Thus $x+\wdt y_3(t)$ is an irreducible  power series in $x$ and $t$, and it is coprime with $x$. By the same argument based on the Taylor map as in Example \ref{ex1},  the relation
$$x\wdt y_1(t)^2-(x+t+\e(t))\wdt y_2(t)^2=0$$
implies that $\wdt y_1(t)=\wdt y_2(t)=0$.\\
\\
This shows that there is no $\b:\N\lgw \N$ such that for every $y(t)\in B_\rho\{t\}^3$ and every $k\in\N$ with
$$G(x,y(t))\in (t)^{\b(k)}$$
there exists $\wdt y(t)\in B_\rho\{t\}^3$ such that
$$G(x,\wdt y(t))=0$$
 and $\wdt y(t)-y(t)\in (t)^k$.\end{ex}

\end{document}